\documentclass[12pt]{amsart}
\usepackage{amssymb}
\usepackage{amsfonts}
\usepackage{amscd}
\usepackage{latexsym}
\usepackage{enumerate}
\usepackage{amsmath}
\usepackage{xy} \xyoption{all}

\numberwithin{equation}{section}

\hfuzz=3pt \vfuzz=3pt

\addtolength{\textwidth}{3cm} \addtolength{\oddsidemargin}{-1.5cm}
\addtolength{\evensidemargin}{-1.5cm}


\newcounter{cs}
\stepcounter{cs}
\newcounter{ds}
\stepcounter{ds}

\newcommand{\casos}{\begin{itemize}}
\newcommand{\fcasos}{\end{itemize}\setcounter{cs}{1}}

\newcommand{\ol}{\overline}

\newcommand{\Si }{{\rm Sink}}

\newtheorem{lem}{Lemma}[section]

\newtheorem{theor}[lem]{Theorem}
\newtheorem{prop}[lem]{Proposition}

\theoremstyle{definition}
\newtheorem{defi}[lem]{Definition}

\newtheorem{remark}[lem]{Remark}

\newcommand{\Z}{\mathbb{Z}}


     \newcommand{\mon}[1]{\mathcal{V}(#1)}              
     

     \newcommand{\cb}[0]{K}                             


     \newcommand{\cam}[2][]{P_{#1}(#2)}
     \newcommand{\rat}[1]{P_{\mathrm{\!rat}}(#1)}
     \newcommand{\ser}[1]{P((#1))}

\begin{document}
\title[Mixed quiver algebras]{Mixed quiver algebras}
\author{Pere Ara and Miquel Brustenga}\address{Departament de Matem\`atiques, Universitat Aut\`onoma de
Barcelona, 08193, Bellaterra (Barcelona),
Spain}\email{para@mat.uab.es, mbrusten@mat.uab.es}

\thanks{Both authors were partially supported by
DGI MICIIN-FEDER MTM2008-06201-C02-01, and by the Comissionat per
Universitats i Recerca de la Generalitat de Catalunya. The second
author was partially supported by a grant of the Departament de
Matem\`{a}tiques, Universitat Aut\`{o}noma de Barcelona.}
\subjclass[2000]{Primary 16D70; Secondary 06A12, 06F05, 46L80}
\keywords{von Neumann regular ring, path algebra, Leavitt path
algebra, universal localization}
\date{\today}

\begin{abstract}
In this paper we introduce a new class of $K$-algebras associated
with quivers. Given any finite chain $\mathbf{K}_r: K=K_0\subseteq
K_1\subseteq \cdots \subseteq K_r$ of fields and a  chain
$\mathbf{E}_r : H_0\subset H_1\subset \cdots \subset H_r=E^0$ of
hereditary saturated subsets of the set of vertices $E^0$ of a
quiver $E$, we build the mixed path algebra
$P_{\mathbf{K}_r}(E,\mathbf{H}_r)$, the mixed Leavitt path algebra
$L_{\mathbf{K}_r}(E,\mathbf{H}_r)$ and the mixed regular path
algebra $Q_{\mathbf{K}_r}(E,\mathbf{H}_r)$ and we show that they
share many properties with the unmixed species $P_K(E)$, $L_K(E)$
and $Q_K(E)$.
\end{abstract}
\maketitle

\section*{Introduction}

The work in the present paper is instrumental for the constructions
developed in \cite{Atams}, where the regular algebra of a finite
poset has been introduced in connection with the realization problem
for von Neumann regular rings, see also \cite{AB2}, \cite{directsum}
and \cite{Areal}. The reader is referred to these papers for further
information on the realization problem, and to \cite{AA1},
\cite{AA2}, \cite{ABC}, \cite{AB3} for related work on Leavitt path
algebras.

In the following, $\cb$ will denote a field and
$E=(E^0,E^1,r,s)$ a finite quiver (oriented graph) with
$E^0=\{1,\dotsc,d\}$. Here $s(e)$ is the {\em source vertex} of
the arrow $e$, and $r(e)$ is the {\em range vertex} of $e$. A {\em
path} in $E$ is either an ordered sequence of arrows
$\alpha=e_1\dotsb e_n$ with $r(e_t)=s(e_{t+1})$ for $1\leqslant
t<n$, or a path of length $0$ corresponding to a vertex $i\in
E^0$, which will be denoted by $p_i$. The paths $p_i$ are called
trivial paths, and we have $r(p_i)=s(p_i)=i$. A non-trivial path
$\alpha=e_1\dotsb e_n$ has length $n$ and we define
$s(\alpha)=s(e_1)$ and $r(\alpha)=r(e_n)$. We will denote the
length of a path $\alpha$ by $|\alpha|$, the set of all paths of
length $n$ by $E^n$, for $n>1$, and the set of all paths by $E^*$.

For $v,w\in E^0$, set $v\ge w$ in case there is a (directed) path
from $v$ to $w$. A subset $H$ of $E^0$ is called \emph{hereditary}
if $v\ge w$ and $v\in H$ imply $w\in H$. A set is \emph{saturated}
if every vertex which feeds into $H$ and only into $H$ is again in
$H$, that is, if $s^{-1}(v)\neq \emptyset$ and
$r(s^{-1}(v))\subseteq H$ imply $v\in H$. Denote by $\mathcal{H}$
(or by $\mathcal{H}_E$ when it is necessary to emphasize the
dependence on $E$) the set of hereditary saturated subsets of $E^0$.

Let us recall the construction from \cite{AB2} of the regular
algebra $Q_K(E)$ of a quiver $E$, although we will follow the
presentation in \cite{Atams} rather than the used in \cite{AB2}.
That is, relations (CK1) and (CK2) below are reversed with respect
to their counterparts in \cite{AB2}, so that we are led to work
primarily with {\it left} modules instead of right modules.

Therefore we recall the basic features of the regular algebra
$Q_K(E)$ in terms of the notation used here. We will only need
finite quivers in the present paper, so we restrict attention to
them.  The algebra $Q(E):=Q_K(E)$ fits into the following
commutative diagram of injective algebra morphisms:

\begin{equation}\notag \begin{CD}
K^d @>>> \cam{E} @>{\iota _{\Sigma}}>> \rat{E} @>>> \ser{E}\\
@VVV @V{\iota _{\Sigma_1}}VV @V{\iota _{\Sigma _1}}VV @V{\iota _{\Sigma _1}}VV\\
\cam{\ol{E}} @>>> L(E) @>{\iota _{\Sigma}}>> Q(E) @>>> U(E)
\end{CD} \notag \end{equation}

Here $P(E)$ is the path $K$-algebra of $E$, $\ol{E}$ denotes the
inverse quiver of $E$, that is, the quiver obtained by reversing the
orientation of all the arrows in $E$, $\ser{E}$ is the algebra of
formal power series on $E$, and $\rat{E}$ is the algebra of rational
series, which is by definition the division closure of $P(E)$ in
$P((E))$ (which agrees with the rational closure \cite[Observation
1.18]{AB2}). The maps $\iota_{\Sigma}$ and $\iota _{\Sigma_1}$
indicate universal localizations with respect to the sets $\Sigma$
and $\Sigma _1$ respectively. Here $\Sigma$ is the set of all square
matrices over $P(E)$ that are sent to invertible matrices by the
augmentation map $\epsilon \colon P(E)\to K^{|E^0|}$. By
\cite[Theorem 1.20]{AB2}, the algebra $\rat{E}$ coincides with the
universal localization $P(E)\Sigma ^{-1}$. The set
$\Sigma_1=\{\mu_v\mid v\in E^0,\,s^{-1}(v)\neq \emptyset\}$ is the
set of morphisms between finitely generated projective left
$P(E)$-modules defined by
 \begin{align*}
  \mu_v\colon P(E)v&\longrightarrow \bigoplus_{i=1}^{n_v}P(E)r(e^v_i)\\
  r&\longmapsto\left(re^v_1,\dotsc,re^v_{n_v}\right)
 \end{align*}
for any $v\in E^0$ such that $s^{-1}(v)\neq\emptyset$.
By a slight abuse of notation, we use also $\mu _v$ to denote the corresponding
maps between finitely generated projective left $\rat{E}$-modules and $\ser{E}$-modules
respectively.

The following relations hold in $Q(E)$:

The following relations hold in $Q(E)$:

(V)\, \, \, \,\hskip.35cm $p_vp_{v'}=\delta _{v,v'}p_v$  for all
$v,v'\in E^0$.

(E1) \, \, \hskip.3cm  $p_{s(e)}e=ep_{r(e)}=e$ for all $e\in E^1$.

(E2) \, \, \hskip.3cm $p_{r(e)}\ol{e}=\ol{e}p_{s(e)}=\ol{e}$ for all
$e\in E^1$.

(CK1)\hskip.5cm $\ol{e}e'=\delta _{e,e'}p_{r(e)}$ for all $e,e'\in
E^1$.

(CK2)\hskip.5cm  $p_v=\sum _{\{ e\in E^1\mid s(e)=v \}}e\ol{e}$ for
every $v\in E^0$ that emits edges.

The Leavitt path algebra $L(E)=P(E)\Sigma_1^{-1}$ is the algebra
generated by $\{p_v\mid v\in E^0\}\cup \{e,\ol{e}\mid e\in E ^1\}$
subject to the relations (1)--(5) above. By \cite[Theorem 4.2]{AB2},
the algebra $Q(E)$ is a von Neumann regular hereditary ring and
$Q(E)=P(E)(\Sigma \cup \Sigma_1)^{-1}$. Here the set $\Sigma$ can be
clearly replaced with the set of all square matrices of the form
$I_n+B$ with $B\in M_n(P(E))$ satisfying $\epsilon (B)=0$, for all
$n\ge 1$.

\section{Structure of ideals}
\label{sect:strucideals}

The structure of the lattice of ideals of $Q(E)$ can be neatly
computed from the graph. Let $H$ be a hereditary saturated subset of
$E^0$. Define the graph $E/H$ by $(E/H)^0=E^0\setminus H$ and
$(E/H)^1=\{e\in E^1: r(e)\notin H\}$, with the functions $r$ and $s$
inherited from $E$. We also define $E_H$ as the restriction of the
graph $E$ to $H$, that is $(E_H) ^0=H$ and $(E_H)^1=\{e\in E^1:
s(e)\in H\}$. For $Y\subseteq E^0$ set $p_Y=\sum _{v\in Y} p_v$.

\begin{prop}
\label{quotient}

(a) The ideals of $Q(E)$ are in one-to-one correspondence with the
order-ideals of $M_E$ and consequently with the hereditary and
saturated subsets of $E$.

(b) If $H$ is a hereditary saturated subset of $E$, then
$Q(E)/I(H)\cong Q(E/H)$, where $I(H)$ is the ideal of $Q(E)$
generated by the idempotents $p_v$ with $v\in H$.

(c) Let $H$ be a hereditary subset of $E^0$. Then the following properties hold:
\begin{enumerate}
\item $P(E_H)=p_HP(E)=p_HP(E)p_H$,
\item $P((E_H))=p_HP((E))=p_HP((E))p_H$,
\item $P_{{\rm rat}}(E_H)=p_HP_{{\rm rat}}(E)=p_HP_{{\rm rat}}(E)p_H$,
\item $Q(E_H)\cong p_HQ(E)p_H$.
\end{enumerate}
\end{prop}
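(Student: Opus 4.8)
The plan is to treat the three statements in increasing order of difficulty, reducing everything to the combinatorics of paths together with the localization descriptions $\rat{E}=P(E)\Sigma^{-1}$ and $Q(E)=\rat{E}\Sigma_1^{-1}$ recorded in the diagram. For (a) I would invoke the identification $V(Q(E))\cong M_E$ established in \cite{AB2} and the general fact that, for a von Neumann regular ring, the lattice of two-sided ideals is isomorphic to the lattice of order-ideals of its monoid of finitely generated projectives; the passage from order-ideals of the graph monoid $M_E$ to hereditary saturated subsets of $E^0$ is then the standard dictionary. For (b) I would write down the homomorphism $\pi\colon Q(E)\to Q(E/H)$ that fixes $p_v,e,\ol e$ when the relevant vertex or range lies outside $H$ and kills them otherwise; the only relation requiring care is (CK2), which survives because for $v\notin H$ the edges emitted by $v$ whose range lies in $H$ are exactly those sent to $0$. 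The map $\pi$ is onto with $I(H)\subseteq\ker\pi$, and $\ker\pi=I(H)$ follows by matching the corresponding ideals under the bijection of (a): no $p_v$ with $v\notin H$ lies in $\ker\pi$, forcing the hereditary saturated set attached to $\ker\pi$ to be $H$.

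The key to (c) is that, since $H$ is hereditary, there are no arrows from $H$ to $E^0\setminus H$; hence every $E$-path starting in $H$ stays in $H$ and is exactly an $E_H$-path, while every $E$-path from $E^0\setminus H$ to itself never enters $H$ and is exactly an $E/H$-path. On the path basis this gives (c1), $p_HP(E)=p_HP(E)p_H=P(E_H)$, and the identical termwise computation for formal series gives (c2). Both statements exhibit the block-triangular decompositions
\begin{equation}\notag
P(E)=\begin{pmatrix}P(E_H)&0\\ *&P(E/H)\end{pmatrix},\qquad \ser{E}=\begin{pmatrix}\ser{E_H}&0\\ *&P((E/H))\end{pmatrix}
\end{equation}
relative to $1=p_H+(1-p_H)$, where the vanishing upper-right block is precisely heredity.

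For (c3) I would use this triangular shape twice. Because the upper-right block vanishes, $u\mapsto p_Hup_H$ is a unital ring homomorphism $\ser{E}\to\ser{E_H}$; as such it carries $\rat{E}$, the division closure of $P(E)$, into the division closure $\rat{E_H}$ of $P(E_H)$, giving $p_H\rat{E}p_H\subseteq\rat{E_H}$. Conversely, given $x\in p_H\rat{E}p_H$ invertible in the corner $\ser{E_H}$, the augmented element $x+(1-p_H)$ lies in $\rat{E}$ and is invertible in $\ser{E}$, so its inverse lies in $\rat{E}$ by division-closedness; taking corners shows $x^{-1}\in p_H\rat{E}p_H$. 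Thus $p_H\rat{E}p_H$ is a division-closed subring of $\ser{E_H}$ containing $P(E_H)$, whence $\rat{E_H}\subseteq p_H\rat{E}p_H$ and equality follows.

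The main obstacle is (c4), because $Q(E)$ is \emph{not} block-triangular: for a mixed arrow $e$ with $s(e)\notin H$, $r(e)\in H$ one has $\ol e\in p_HQ(E)(1-p_H)$, so the ghost arrows link the two blocks in both directions. I would instead use $Q(E)=\rat{E}\Sigma_1^{-1}$ and split $\Sigma_1=\Sigma_1^H\sqcup\Sigma_1'$ according to whether the emitting vertex lies in $H$. By heredity, for $v\in H$ the map $\mu_v$ is a map between projectives supported on $H$, so it lives in the corner and becomes invertible there; this produces a well-defined homomorphism $\phi\colon Q(E_H)=\rat{E_H}(\Sigma_1^H)^{-1}\to p_HQ(E)p_H$. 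For injectivity I would pass to the series completions, using the faithful inclusion $Q(E)\hookrightarrow\uni{E}$ from the diagram (and its $E_H$-analogue) to compute inside $p_H\uni{E}p_H$. Surjectivity is the delicate point: one must show that the corner $p_Hqp_H$ of an arbitrary element does not genuinely depend on the $\Sigma_1'$-inversions. I expect to establish this by repeatedly applying (CK1) and (CK2) to collapse factors $\ol e\,e'$, so that any excursion through $E^0\setminus H$ telescopes and $p_Hqp_H$ reduces to an expression in the generators of $Q(E_H)$; controlling this interaction between the corner and the non-corner localizations $\Sigma_1'$ is the step I anticipate to be the most technical. As a cross-check, since $Q(E)$ is regular, $p_HQ(E)p_H$ is regular and one may verify that $V(\phi)$ is the monoid isomorphism predicted by (a) applied to $E_H$.
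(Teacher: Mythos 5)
Your treatments of (a) and (c1)--(c3) are correct, and for (c3) genuinely different from the paper's: where the paper quotes Cohn's canonical form $a=\gamma(I-B)^{-1}\delta$ from \cite{free} and computes $p_Ha=(p_H\gamma p_H)(p_HI_n-p_HBp_H)^{-1}(p_H\delta p_H)$, you observe that heredity makes $u\mapsto p_Hup_H$ a unital ring homomorphism $\ser{E}\to\ser{E_H}$, hence it carries the division closure of $P(E)$ into that of $P(E_H)$, and your augmentation trick $x\mapsto x+(1-p_H)$ proves inversion-closedness of the corner for the reverse inclusion; this is slicker and avoids Cohn's theorem (the paper uses essentially your inversion-closedness argument for one inclusion and the canonical form for the other). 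In (b) your route --- a direct map on generators plus identification of the kernel via (a) --- differs from the paper's, which cites \cite{APS} for $L(E)/J\cong L(E/H)$ and then builds a two-sided inverse $\lambda$ from universal properties; your kernel argument via (a) is sound, but note that $Q(E)$ is \emph{not} generated by $\{p_v,e,\ol{e}\}$: it also contains the entries of inverses of matrices in $\Sigma$. Prescribing $\pi$ on those generators only defines it on $L(E)$; to extend it to $Q(E)$ you must check, as the paper does, that $\Sigma(E)$ is sent into $\Sigma(E/H)$ and invoke the universal property of the localization (and lift matrices of $\Sigma(E/H)$ to $\Sigma(E)$ for surjectivity). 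This is routine but cannot be skipped.

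The genuine gap is surjectivity in (c4), which you yourself flag as the most technical step and leave unproved. Your plan --- telescoping factors $\ol{e}e'$ via (CK1)/(CK2) so that excursions through $E^0\setminus H$ collapse --- cannot work as stated, because a general element of $Q(E)$ is not a word in the Leavitt generators: it involves the $\Sigma$-inversions, i.e.\ inverses of rational series, interleaved with ghost paths, and (CK1)/(CK2) give no handle on a factor like $p_H(1-a)^{-1}$ with $a$ supported partly outside $H$. The missing ingredient is the normal form from \cite{AB2} that the paper uses: every $a\in Q(E)$ is a finite sum $a=\sum_{\gamma\in E^*}a_\gamma\ol{\gamma}$ with $a_\gamma\in\rat{E}p_{s(\gamma)}$. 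Granting this, heredity gives $p_Hap_H=\sum_{\gamma\in(E_H)^*}(p_Ha_\gamma p_H)\ol{\gamma}$, and $p_Ha_\gamma p_H\in\rat{E_H}$ by your own (c3), so $p_Hap_H\in Q(E_H)$ at once --- no analysis of the interaction between the corner and the $\Sigma_1'$-inversions is needed. (Also, injectivity of $Q(E_H)\to p_HQ(E)p_H$ is had more cheaply than via your proposed passage to $\uni{E}$: its kernel is an ideal of $Q(E_H)$ killing no $p_v$, hence zero by (a) applied to $E_H$; and your closing cross-check is only that --- a $\mathcal{V}$-isomorphism does not imply surjectivity of the ring map, so it cannot substitute for the missing step.)
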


\begin{proof}
(a) By \cite[Theorem 4.2]{AB2} we have a monoid isomorphism
$\mathcal V (Q(E))\cong M_E$. Since $Q(E)$ is von Neumann regular,
we have a lattice isomorphism $L_2(Q(E))\cong L(M_E)$, where
$L_2(Q(E))$ denotes the lattice of two-sided ideals of $Q(E)$ and
$L(M_E)$ denotes the lattice of order-ideals of $M_E$,
 cf. \cite[Proposition 7.3]{GW}. Now by \cite[Proposition 5.2]{AMP}
there is a lattice isomorphism $L(M_E)\cong \mathcal H$, where
$\mathcal H$ is the lattice of hereditary saturated subsets of
$E^0$. Given an ideal $I$ of $Q(E)$, the set of vertices $v$ such
that $p_v\in I$ is a hereditary saturated subset of $E^0$ which
generates $I$ as an ideal.

(b) We shall use some universal properties. Let $H$ be a hereditary
saturated subset of $E^0$ and let $I(H)$ be the ideal of $Q(E)$
generated by $H$. By \cite[Lemma 2.3]{APS}, there is a $K$-algebra
isomorphism $\varphi \colon L(E)/J\to L(E/H)$, where $J$ is the
ideal of $L(E)$ generated by the idempotents $p_v$ with $v\in H$.
The isomorphism $\varphi$ is defined in such a way that it is the
identity on $(E/H)^*$ and $0$ on $E^*\setminus (E/H)^*$. Write
$\Sigma(E)$ (resp. $\Sigma(E/H)$) for the set of matrices of the
form $I_n+B$, where $B\in M_n(P(E))$ (resp. $B\in M_n(P(E/H))$)
satisfies $\epsilon (B)=0$. Clearly $\varphi (\Sigma (E))\subseteq
\Sigma (E/H)$ so that the map
$$\widetilde{\varphi}=\varphi \circ \pi \colon L(E)\to L(E)/J\to
L(E/H)$$ gives rise to an algebra homomorphism $Q(E)\to Q(E/H)$
which is $0$ on $H$, so we get a homomorphism $\rho \colon
Q(E)/I(H)\to Q(E/H)$.

To construct the inverse, consider the map $\psi \colon L(E/H)\to
Q(E)/I(H) $ which is given by the composition of $\varphi
^{-1}\colon L(E/H)\to L(E)/J $ and the natural map $L(E)/J\to
Q(E)/I(H)$. Clearly $\psi (\Sigma (E/H))$ is contained in the set
of invertible matrices over $Q(E)/I(H)$, because each element in
$\Sigma (E/H)$ can be lifted to an element in $\Sigma (E)$. It
follows from the universal property of $Q(E/H)$ that there is a
unique homomorphism $\lambda \colon Q(E/H)\to Q(E)/I(H) $
extending $\psi$. Using uniqueness of extensions, it is fairly
easy to see that $\lambda \circ \rho =\text{Id}_{Q(E)/I(H)}$ and
$\rho \circ \lambda=\text{Id}_{Q(E/H)}$.

(c) (1), (2): This is clear from the fact that $H$ is a
hereditary subset of $E^0$.

(3) The algebra $p_HP_{\text{rat}}(E)=p_HP_{\text{rat}}(E)p_H$ is
rationally closed in $p_HP((E))p_H=P((E_H))$ and contains $P(E_H)$,
so that $P_{\text{rat}}(E_H)\subseteq p_HP_{\text{rat}}(E)$.

It remains to show that $ p_HP_{\text{rat}}(E)\subseteq
P_{\text{rat}}(E_H)$. If $a\in P_{\text{rat}}(E)$, there exist by
\cite[Theorem 7.1.2]{free} a row $\gamma \in {^nP(E)}$, a column
$\delta \in P(E)^n$ and a matrix $B\in M_n(P(E))$ such that
$\epsilon (B)=0$ such that
$$a=\gamma (I-B)^{-1}\delta .$$ Note that, since $H$ is a hereditary
subset of $E^0$, we have $p_H\tau =p_H\tau p_H$ for every matrix
$\tau$ over $P((E))$. Applying this we get
$$p_Ha=(p_H\gamma  p_H)(p_HI_n- (p_HBp_H))^{-1}(p_H\delta p_H) ,$$
which shows that $p_Ha=p_Hap_H\in P_{\text{rat}}(E_H)$.

(4) We have a map $P(E_H)=p_HP(E)p_H\to p_HQ(E)p_H$ which is clearly
$(\Sigma (E_H)\cup \Sigma _1(E_H))^{-1}$-inverting and thus induces
a $K$-algebra homomorphism $Q(E_H)\to p_HQ(E)p_H$. Since this map
does not annihilate any basic idempotent $p_v$, we conclude from (a)
that it is injective, so that we can consider $Q(E_H)$ as a
subalgebra of $p_HQ(E)p_H$.

To show the reverse containment, recall from \cite{AB2} that an
element $a\in Q(E)$ can be written as a finite sum
$$a=\sum_{\gamma\in E^*} a_{\gamma}\ol{\gamma} ,$$
where $a_{\gamma}\in \rat{E}p_{s(\gamma)}$. We get
$$p_Hap_H= \sum_{\gamma\in (E_H)^*} (p_Ha_{\gamma}p_H)\ol{\gamma}$$
with $p_Ha_{\gamma}p_H=p_Ha_{\gamma}\in
p_HP_{\text{rat}}(E)=P_{\text{rat}}(E_H)$ by (c). Thus $p_Hap_H\in
Q(E_H)$ as desired.
\end{proof}

\section{Mixed quiver algebras}

Since we will be playing in this section with different fields, it
will be convenient that our notation remembers the field we are
considering, henceforth we will denote the path $K$-algebra by
$P_K(E)$, the regular $K$-algebra of the quiver by $Q_K(E)$, and so
on.

Let $K\subseteq L$ be a field extension and let $E$ be a finite
quiver. There is an obvious $K$-algebra homomorphism $h\colon
Q_K(E)\to Q_L(E)$ which satisfies $h(p_v)\ne 0$ for all $v\in E^0$.
It follows from Proposition \ref{quotient} that the map $h$ is
injective. Using this map, we will view $Q_K(E)$ as a $K$-subalgebra
of $Q_L(E)$. Let $H$ be a hereditary saturated subset of $E^0$ and
consider the idempotent
$$p_H=\sum _{v\in H}p_v\in Q_K(E)\subseteq Q_L(E).$$
By Proposition \ref{quotient}(c)(4) we have that $p_HQ_L(E)p_H\cong Q_L(E_H)$,
where $E_H$ denotes the
restriction of $E$ to $H$. The {\it mixed
regular path algebra} $Q_{K\subseteq L}(E,H)$ is defined as the
$K$-subalgebra of $Q_L(E)$ generated by $Q_K(E)$ and
$p_HQ_L(E)p_H$. Observe that
$$Q_{K\subseteq L}(E,H)=Q_K(E)+Q_K(E)(p_HQ_L(E)p_H)Q_K(E)$$
and that $I=Q_K(E)(p_HQ_L(E)p_H)Q_K(E)$ is an ideal in
$Q=Q_{K\subseteq L}(E,H)$ such that $Q/I\cong Q_K(E/H)$, because
$I\cap Q_K(E)$ agrees with the ideal $I_K(H)$ of $Q_K(E)$
generated by $H$.

\begin{defi}\label{buildblock}
Let $K_0\subseteq K_1\subseteq \cdots \subseteq K_r$ be a chain of
fields. Let $E$ be a finite quiver and let $H_0\subset H_1\subset
\cdots \subset H_r=E^0$ be a chain of hereditary saturated subsets
of $E^0$. We build rings $R_i$ $i=0,1,\dots ,r$ inductively as follows:

{\rm (1)} $R_0=Q_{K_r}(E_{H_0})$.

{\rm (2)} $R_i=Q_{K_{r-i}}(E_{H_i})+
Q_{K_{r-i}}(E_{H_i})p_{H_{i-1}}R_{i-1}p_{H_{i-1}}Q_{K_{r-i}}(E_{H_i}) $ for $1\le i\le r$.

Each $R_i$ is a unital $K_{r-i}$-algebra with unit $p_{H_i}$ and
we have $Q_{K_{r-i}}(E_{H_i})\subseteq R_i\subseteq
Q_{K_r}(E_{H_i})$.
\end{defi}

Before we establish the basic properties of our construction, we
simplify notation as follows. A chain of fields of length $r$ will
be denoted:
$$\mathbf{K}_r : K_0\subseteq K_1 \subseteq \cdots \subseteq K_r .$$
(Note that the inclusions need not be strict.)
 Similarly a chain of hereditary saturated subsets of $E^ 0$ of length $r$ will be
 denoted:
 $$\mathbf{H}_r : H_0\subset H_1\subset \cdots \subset H_r=E^0 .$$
 (Here we have strict inclusions. The choice of strict/non-strict inclusions is
 made to gain flexibility in the notation, and in particular with regard to be
 aligned with the notation used in
 \cite{Atams}.)
 Now we denote the $K_0$-algebra $R_r$
 constructed in Definition \ref{buildblock} by $Q_{\mathbf{K}_r}(E;\mathbf{H}_r)$.
The straightforward proof of the next two results is left to the
reader.

\begin{prop}\label{cutting}
Let $\mathbf{K}_r$, $\mathbf{H}_r$ and $Q_{\mathbf{K}_r}(E;\mathbf{H}_r)$
be as before. Let $I_{i-1}$ be the ideal of $Q_{\mathbf{K}_r}(E;\mathbf{H}_r)$
generated by $p_{H_{i-1}}$. Then
$$Q_{\mathbf{K}_r}(E;\mathbf{H}_r)/I_{i-1}\cong Q_{\mathbf{K}_{r-i}}(E/H_{i-1};
\mathbf{H}^{r-i}) ,$$
where
$$\mathbf{K}_{r-i} : K_0\subseteq K_1 \subseteq \cdots \subseteq K_{r-i} $$
and
$$\mathbf{H}^{r-i}:  H_i\setminus H_{i-1}\subset H_{i+1}\setminus H_{i-1}
\subset \cdots \subset H_r\setminus H_{i-1}=(E/H_{i-1})^0 .$$
\end{prop}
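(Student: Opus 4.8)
The plan is to prove the isomorphism $Q_{\mathbf{K}_r}(E;\mathbf{H}_r)/I_{i-1}\cong Q_{\mathbf{K}_{r-i}}(E/H_{i-1};\mathbf{H}^{r-i})$ by induction on $r$, using the structural description of $R_i$ in Definition \ref{buildblock} together with the quotient computation already established in Proposition \ref{quotient}(b) and the discussion immediately preceding Definition \ref{buildblock}.

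Let me think about the structure here. We have $Q_{\mathbf{K}_r}(E;\mathbf{H}_r) = R_r$, built as a tower $R_0 \subseteq R_1 \subseteq \cdots \subseteq R_r$ with $R_r \subseteq Q_{K_r}(E)$. The ideal $I_{i-1}$ is generated by $p_{H_{i-1}}$. The claim is that quotienting out by this ideal collapses the bottom $i$ layers of the tower and leaves a mixed regular path algebra on the quotient quiver $E/H_{i-1}$.

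Let me verify the base case and the structure. For $i=1$: $I_0$ is the ideal generated by $p_{H_0}$. Now $R_1 = Q_{K_{r-1}}(E_{H_1}) + Q_{K_{r-1}}(E_{H_1})p_{H_0}R_0 p_{H_0}Q_{K_{r-1}}(E_{H_1})$, where $R_0 = Q_{K_r}(E_{H_0})$. The ideal generated by $p_{H_0}$ in $R_1$ should be exactly the second summand (the $K_r$-enriched piece), so that $R_1/\langle p_{H_0}\rangle \cong Q_{K_{r-1}}(E_{H_1}/H_0)$. This is precisely the statement for the single-field-extension mixed algebra discussed before the definition, where $Q/I \cong Q_K(E/H)$ with $I = Q_K(E)(p_H Q_L(E)p_H)Q_K(E)$. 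So the base case of the induction on $r$ reduces to that earlier computation.

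The inductive mechanism I would use is this. Write $R_r = Q_{\mathbf{K}_r}(E;\mathbf{H}_r)$ and observe that $R_r$ is built on top of $R_{r-1}$ by the single layer $R_r = Q_{K_0}(E) + Q_{K_0}(E)p_{H_{r-1}}R_{r-1}p_{H_{r-1}}Q_{K_0}(E)$. The key reduction step: the ideal $I_{i-1}$ generated by $p_{H_{i-1}}$ sits inside the ideal generated by $p_{H_{r-1}}$ when $i \le r$, and I would argue that $I_{i-1}$ meets each layer correctly. Concretely, I expect to show $I_{i-1} \cap R_{r-1}$ is the analogous ideal $I_{i-1}'$ in $R_{r-1}$ (generated by $p_{H_{i-1}}$ there), so that $R_r/I_{i-1}$ is obtained from $R_{r-1}/I_{i-1}'$ by adjoining the outermost $Q_{K_0}(E/H_{i-1})$ layer. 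By the induction hypothesis $R_{r-1}/I_{i-1}' \cong Q_{\mathbf{K}_{r-1-(i-1)}}(E/H_{i-1};\mathbf{H}^{\cdots})$, and then reattaching the top layer realizes the full mixed algebra on $E/H_{i-1}$. The careful bookkeeping is matching the field chain $\mathbf{K}_{r-i}$ and the shifted hereditary chain $\mathbf{H}^{r-i}$: quotienting by $H_{i-1}$ replaces each $H_j$ by $H_j \setminus H_{i-1}$ in the quiver $E/H_{i-1}$, and drops the top $i$ fields $K_{r-i+1},\dots,K_r$ from consideration, leaving $K_0 \subseteq \cdots \subseteq K_{r-i}$.

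\textbf{The main obstacle} I anticipate is verifying that $I_{i-1}$ intersects the intermediate algebras cleanly, i.e.\ that passing to the quotient is compatible with the recursive ``$A + Ap_H B p_H A$'' construction. The delicate point is that $I_{i-1}$ is generated in the \emph{outer} algebra $R_r$, so one must check $I_{i-1} \cap Q_{K_{r-j}}(E_{H_j})$ equals the ideal of that subalgebra generated by $H_{i-1}$, and that no unexpected elements from the higher-field layers $R_0,\dots,R_{i-2}$ survive in the quotient. This uses Proposition \ref{quotient}(a) (the vertex set controlling ideals) and the corner isomorphisms of Proposition \ref{quotient}(c)(4) to track $p_H$-cutdowns. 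Once this intersection claim is in hand, assembling the quotient isomorphism via the universal property of $Q$ (exactly as in the proof of Proposition \ref{quotient}(b)) and peeling off the top layer is routine, which is presumably why the authors left it to the reader.
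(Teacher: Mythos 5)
The paper gives no proof of Proposition \ref{cutting} (it is explicitly ``left to the reader''), and your plan --- induction on the chain length, peeling off the top layer $R_r=Q_{K_0}(E)+Q_{K_0}(E)p_{H_{r-1}}R_{r-1}p_{H_{r-1}}Q_{K_0}(E)$ and reducing to the single-extension computation $Q/I\cong Q_K(E/H)$ recorded just before Definition \ref{buildblock}, with Proposition \ref{quotient}(a),(b) controlling the ideals --- is exactly the intended argument. The step you flag as the main obstacle is in fact immediate from the corner identity $p_{H_{r-1}}R_rp_{H_{r-1}}=R_{r-1}$ (Proposition \ref{cornering}): since $H_{i-1}\subseteq H_{r-1}$, any $x\in I_{i-1}\cap R_{r-1}$ satisfies $x=p_{H_{r-1}}xp_{H_{r-1}}\in p_{H_{r-1}}\bigl(R_r\,p_{H_{i-1}}R_r\bigr)p_{H_{r-1}}=R_{r-1}\,p_{H_{i-1}}R_{r-1}$, i.e.\ the ideal of $R_{r-1}$ generated by $p_{H_{i-1}}$, so your intersection claim holds and the induction goes through as you describe.
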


\begin{prop}\label{cornering}
Let $\mathbf{K}_r$,  $\mathbf{H}_r$ and
$Q_{\mathbf{K}_r}(E;\mathbf{H}_r)$ be as before. Then
$$p_{H_i}Q_{\mathbf{K}_r}(E;\mathbf{H}_r)p_{H_i}\cong Q_{\mathbf{K}^{i}}(E_{H_i};
\mathbf{H}_i) ,$$ where
$$\mathbf{K}^{i} : K_{r-i}\subseteq K_{r-i+1} \subseteq \cdots \subseteq K_{r} $$
and
$$\mathbf{H}_{i}:  H_0\subset H_1
\subset \cdots \subset H_i .$$
\end{prop}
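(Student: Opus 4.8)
The plan is to identify the right-hand side with the intermediate ring $R_i$ of Definition~\ref{buildblock} and then to compute the corner $p_{H_i}Q_{\mathbf{K}_r}(E;\mathbf{H}_r)p_{H_i}$ by an induction that peels the construction off one layer at a time.

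First I would unwind the definition of $Q_{\mathbf{K}^{i}}(E_{H_i};\mathbf{H}_i)$. Building it from the quiver $E_{H_i}$, the field chain $\mathbf{K}^{i}$ and the subset chain $\mathbf{H}_i$ uses, at its $k$-th stage, the quiver $(E_{H_i})_{H_k}=E_{H_k}$ (because $H_k\subseteq H_i$) and the field $K_{r-k}$ (the relabelled chain $\mathbf{K}^{i}:K'_0\subseteq\cdots\subseteq K'_i$ satisfies $K'_m=K_{r-i+m}$, so $K'_{i-k}=K_{r-k}$), while $H'_{k-1}=H_{k-1}$. Hence the recursion producing $Q_{\mathbf{K}^{i}}(E_{H_i};\mathbf{H}_i)$ coincides \emph{term by term} with the recursion $R_0,R_1,\dots,R_i$ of Definition~\ref{buildblock}, and therefore $Q_{\mathbf{K}^{i}}(E_{H_i};\mathbf{H}_i)=R_i$. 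Writing $Q:=Q_{\mathbf{K}_r}(E;\mathbf{H}_r)=R_r$ and invoking the corner isomorphism $p_{H_i}Q_{K_r}(E)p_{H_i}\cong Q_{K_r}(E_{H_i})$ of Proposition~\ref{quotient}(c)(4), the statement reduces to proving the equality of subalgebras $p_{H_i}R_rp_{H_i}=R_i$ inside $p_{H_i}Q_{K_r}(E)p_{H_i}$.

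The core is then to show $p_{H_i}R_jp_{H_i}=R_i$ for all $i\le j\le r$ by induction on $j$. Since $p_{H_{k-1}}\in A_k:=Q_{K_{r-k}}(E_{H_k})$, the defining relation gives $R_{k-1}=p_{H_{k-1}}R_{k-1}p_{H_{k-1}}\subseteq R_k$, so that $R_i\subseteq R_j$ and $R_i=p_{H_i}R_ip_{H_i}\subseteq p_{H_i}R_jp_{H_i}$, yielding the inclusion $\supseteq$; the base case $j=i$ is immediate because $p_{H_i}$ is the unit of $R_i$. For the inductive step I corner the relation $R_j=A_j+A_jp_{H_{j-1}}R_{j-1}p_{H_{j-1}}A_j$. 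The term $p_{H_i}A_jp_{H_i}=Q_{K_{r-j}}(E_{H_i})$ by Proposition~\ref{quotient}(c)(4) (here $H_i$ is hereditary saturated in $E_{H_j}$ and $(E_{H_j})_{H_i}=E_{H_i}$), and it lies in $R_i$ because $K_{r-j}\subseteq K_{r-i}$ and $Q_{K_{r-i}}(E_{H_i})\subseteq R_i$.

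The main obstacle is the middle term, and the key observation that disposes of it is a corner absorption. Since $H_i\subseteq H_{j-1}$ we have $p_{H_i}=p_{H_i}p_{H_{j-1}}$, whence for $a\in A_j$
$$p_{H_i}\,a\,p_{H_{j-1}}=p_{H_i}\bigl(p_{H_{j-1}}\,a\,p_{H_{j-1}}\bigr),\qquad p_{H_{j-1}}\,a\,p_{H_{j-1}}\in p_{H_{j-1}}A_jp_{H_{j-1}}=Q_{K_{r-j}}(E_{H_{j-1}}),$$
and symmetrically on the right. As $K_{r-j}\subseteq K_{r-j+1}$ gives $Q_{K_{r-j}}(E_{H_{j-1}})\subseteq R_{j-1}$, every spanning product of the middle term, namely $p_{H_i}ap_{H_{j-1}}\,v\,p_{H_{j-1}}a'p_{H_i}$ with $a,a'\in A_j$ and $v\in R_{j-1}$, equals $p_{H_i}(bvb')p_{H_i}$ with $b,b'\in R_{j-1}$; hence $bvb'\in R_{j-1}$ and the product lies in $p_{H_i}R_{j-1}p_{H_i}=R_i$ by the inductive hypothesis. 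This gives $p_{H_i}R_jp_{H_i}\subseteq R_i$, hence equality, and the case $j=r$ yields $p_{H_i}Qp_{H_i}=R_i=Q_{\mathbf{K}^{i}}(E_{H_i};\mathbf{H}_i)$. I expect the only genuine subtlety to be exactly this collapse: the factorization $p_{H_i}ap_{H_{j-1}}=p_{H_i}(p_{H_{j-1}}ap_{H_{j-1}})$ is what pushes the two outer $A_j$-factors into the corner $Q_{K_{r-j}}(E_{H_{j-1}})$ that is already contained in $R_{j-1}$, so that the induction can close.
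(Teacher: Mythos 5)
Your proof is correct: the paper itself gives no argument for this proposition (it is explicitly ``left to the reader'' as straightforward), and your route --- observing that the recursion of Definition~\ref{buildblock} applied to $(E_{H_i},\mathbf{K}^i,\mathbf{H}_i)$ reproduces $R_0,\dots,R_i$ term by term, then proving $p_{H_i}R_jp_{H_i}=R_i$ for $i\le j\le r$ by induction on $j$ --- is exactly the intended unwinding of the construction. The one step that needed care, absorbing the outer factors via $p_{H_i}ap_{H_{j-1}}=p_{H_i}\bigl(p_{H_{j-1}}ap_{H_{j-1}}\bigr)$ with $p_{H_{j-1}}A_jp_{H_{j-1}}=Q_{K_{r-j}}(E_{H_{j-1}})\subseteq R_{j-1}$ by Proposition~\ref{quotient}(c)(4) and the field inclusion $K_{r-j}\subseteq K_{r-j+1}$, you identify and handle correctly, at the same (implicit) level of compatibility of identifications used throughout the paper.
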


We are going to show that the algebras
$Q_{\mathbf{K}_r}(E;\mathbf{H}_r)$ above are universal localizations
of suitable mixed path algebras. This is analogous to the situation
with the usual path algebra of a quiver and its regular algebra
\cite{AB2}, and plays an important role in the applications, see
\cite[Sections 5 and 6]{Atams}.

We retain the above notation. The {\it mixed path algebra}
$P_{\mathbf{K}_r}(E;\mathbf{H}_r)$ is the $K_0$-subalgebra of the
usual path $K_r$-algebra $P_{K_r}(E)$ defined inductively as
follows. Set $P_0:=P_{K_r}(E_{H_0})$, and for $1\le i\le r$, put
$P_i:=P_{K_{r-i}}(E_{H_i})+ P_{K_{r-i}}(E_{H_i})p_{H_{i-1}}P_{i-1}$.
Then the $K_0$-algebra $P_{\mathbf{K}_r}(E;\mathbf{H}_r)$ is by
definition the algebra $P_r$. Observe that this algebra is the usual
path algebra whenever all the fields in the chain are equal.

Assume that $|E^0|=d$. The usual augmentation $\epsilon \colon
P_{K_r}(E)\to K_r^d$ restricts to a surjective split homomorphism
$$\epsilon\colon P_{\mathbf{K}_r}(E;\mathbf{H}_r)\longrightarrow
\prod _{i=0}^r\,\,\prod_{v\in H_{i}\setminus H_{i-1}} K_{r-i}p_v.$$
Similar definitions give the mixed power series algebra over the
quiver $P_{\mathbf{K}_r}((E;\mathbf{H}_r))$ and the mixed algebra of
rational power series $P^{\rm rat}_{\mathbf{K}_r}(E;\mathbf{H}_r)$.
For instance, when $r=1$ we have $P^{\rm
rat}_{\mathbf{K}_1}(E;\mathbf{H}_1)=P^{\rm rat}_{K_0}(E)+P^{\rm
rat}_{K_0}(E)p_{H_0}P^{\rm rat}_{K_1}(E_{H_0})$.

The following generalizes the unmixed case \cite[Theorem 1.20]{AB2}.

\begin{theor}
\label{rat-locali} Let $\mathbf{K}_r$, $\mathbf{H}_r$ and
$P_{\mathbf{K}_r}(E;\mathbf{H}_r)$ be as before. Let $\Sigma$ denote
the set of matrices over $P_{\mathbf{K}_r}(E;\mathbf{H}_r)$ that are
sent to invertible matrices by $\epsilon$. Then $P^{\rm
rat}_{\mathbf{K}_r}(E;\mathbf{H}_r)$ is the rational closure of
$P_{\mathbf{K}_r}(E;\mathbf{H}_r)$ in $P_{K_r}((E))$, and the
natural map $P_{\mathbf{K}_r}(E;\mathbf{H}_r)\Sigma ^{-1}\to P^{\rm
rat}_{\mathbf{K}_r}(E;\mathbf{H}_r)$ is an isomorphism. \end{theor}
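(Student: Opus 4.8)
The statement has two parts: (i) the rational closure of $P_{\mathbf{K}_r}(E;\mathbf{H}_r)$ inside $P_{K_r}((E))$ equals $P^{\rm rat}_{\mathbf{K}_r}(E;\mathbf{H}_r)$, and (ii) the natural map from the universal localization $P_{\mathbf{K}_r}(E;\mathbf{H}_r)\Sigma^{-1}$ to this rational closure is an isomorphism. My plan is to proceed by induction on the length $r$ of the chain, using the recursive description $P_r = P_{K_0}(E) + P_{K_0}(E)p_{H_{r-1}}P_{r-1}$ together with the cutting/cornering behaviour encoded in Propositions \ref{cutting} and \ref{cornering} and the unmixed base case \cite[Theorem 1.20]{AB2}.

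For the base case $r=0$, the algebra $P_{\mathbf{K}_0}(E;\mathbf{H}_0)=P_{K_r}(E_{H_0})$ is an ordinary path algebra over a single field, so both assertions are exactly \cite[Theorem 1.20]{AB2}. For the inductive step I would exploit the ideal $I$ generated by $p_{H_{r-1}}$. On the quotient side, $P_{\mathbf{K}_r}(E;\mathbf{H}_r)/I$ is an unmixed path algebra $P_{K_0}(E/H_{r-1})$, and its rational closure is handled by the unmixed theorem; on the corner side, $p_{H_{r-1}}P_{\mathbf{K}_r}(E;\mathbf{H}_r)p_{H_{r-1}}$ recovers a mixed path algebra of one shorter length, to which the induction hypothesis applies. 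The key technical tool, transported from the proof of Proposition \ref{quotient}(c)(3), is the identity $p_H\tau = p_H\tau p_H$ for every matrix $\tau$ over $P_{K_r}((E))$, valid because each $H_i$ is hereditary; this lets me split any admissible matrix description $a=\gamma(I-B)^{-1}\delta$ of a rational element according to the idempotent $p_{H_{r-1}}$ and the complementary idempotent, and recognize the two pieces as lying in the corner and the quotient respectively. Showing that both inclusions between the rational closure and $P^{\rm rat}_{\mathbf{K}_r}(E;\mathbf{H}_r)=P^{\rm rat}_{K_0}(E)+P^{\rm rat}_{K_0}(E)p_{H_{r-1}}P^{\rm rat}_{r-1}$ hold then reduces to the two lower-complexity cases.

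For part (ii), once part (i) is established, I would argue that $\Sigma$-inverting maps out of $P_{\mathbf{K}_r}(E;\mathbf{H}_r)$ correspond to maps into rings in which every $I+B$ with $\epsilon(B)=0$ becomes invertible; since $P^{\rm rat}_{\mathbf{K}_r}(E;\mathbf{H}_r)$ is the rational closure and is generated over $P_{\mathbf{K}_r}(E;\mathbf{H}_r)$ precisely by inverses of such matrices, the universal property of the localization gives a surjection, and the reverse map comes from the fact that the rational closure is itself a $\Sigma$-inverting overring. The cleanest route is to verify that $\Sigma$ is closed under the matrix operations needed for a division-closure argument (multiplicativity and the block-triangular lifting of matrices across the idempotent splitting), so that $P_{\mathbf{K}_r}(E;\mathbf{H}_r)\Sigma^{-1}$ already realizes the rational closure.

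The main obstacle I anticipate is the bookkeeping in the inductive step when decomposing a rational element $a=\gamma(I-B)^{-1}\delta$ relative to the idempotent $p_{H_{r-1}}$: one must verify that the off-diagonal interaction between the corner $p_{H_{r-1}}(\cdot)p_{H_{r-1}}$ and its complement respects the field-mixing structure, i.e. that the pieces genuinely land in $P^{\rm rat}_{K_0}(E)p_{H_{r-1}}P^{\rm rat}_{r-1}$ rather than in a larger algebra over a bigger field. Controlling which field each matrix entry lives over, while simultaneously passing between the quotient and the corner without losing the admissible form $\gamma(I-B)^{-1}\delta$, is where the argument requires genuine care rather than formal citation.
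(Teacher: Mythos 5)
Your plan for part (i) is essentially the paper's argument: the paper also reduces to $r=1$ (with an easy induction for general $r$), and for the inclusion of the rational closure $\mathcal R$ into $S:=P^{\rm rat}_K(E)+P^{\rm rat}_K(E)p_HP^{\rm rat}_L(E_H)$ it splits the matrix $B$ in $a=\lambda(I-B)^{-1}\rho$ as $B=B_1+B_2$ according to whether the supporting paths end in $E^0\setminus H$ or in $H$; heredity of $H$ gives $B_2B_1=0$, whence $(I-B)^{-1}=(I-B_1)^{-1}(I-B_2)^{-1}=(I-B_1)^{-1}+(I-B_1)^{-1}B_2(I-B_2)^{-1}\in M_n(S)$. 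Your proposed splitting along $p_{H_{r-1}}$ via $p_H\tau=p_H\tau p_H$ is the same mechanism, so this half of your proposal would go through.

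The genuine gap is in part (ii). You assert that the universal property gives a surjection $\phi\colon P_{\mathbf{K}_r}(E;\mathbf{H}_r)\Sigma^{-1}\to\mathcal R$ and that ``the reverse map comes from the fact that the rational closure is itself a $\Sigma$-inverting overring.'' That fact is precisely what produces the \emph{forward} surjection (this is the standard result the paper cites, \cite[Lemma 10.35(3)]{Luck}); it produces no map in the other direction, because the rational closure, being a subring of $P_{K_r}((E))$, carries no universal property one can map out of. In general the canonical map from a universal localization onto a rational closure has nontrivial kernel, so injectivity of $\phi$ is the real content of the theorem, and it is where the paper spends most of its proof. Your fallback --- verifying that $\Sigma$ is multiplicatively and block-triangularly closed --- only re-derives surjectivity. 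The paper instead proves injectivity directly: it first establishes Claim 1, that $p_HR\Sigma^{-1}=p_HP^{\rm rat}_L(E_H)$, using a further splitting $B_2=B_2'+B_2''$ by where the entries \emph{start}, with $(B_2')^2=0$ and $B_2''B_2'=0$; then, for $x\in\ker\phi$ written as $x=\lambda(I-B)^{-1}\rho$, it shows $x(1-p_H)=0$ (reducing to the unmixed isomorphism $\phi_K$ of \cite[Theorem 1.20]{AB2}) and $p_Hx=0$ (by Claim 1), so that $x=(1-p_H)xp_H$ lies in $\sum_i P^{\rm rat}_K(E/H)\,e_i\,P^{\rm rat}_L(E_H)$ over the crossing edges $e_i$; finally Claim 2, a coefficient-by-coefficient linear-independence argument in $P_L((E))$ (comparing coefficients of paths $\gamma e\mu$, and using that the $b_i$ start at $r(e)$ and are $K$-independent), forces $x=0$. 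Without an argument of this kind your inductive step cannot close, since the isomorphism claim is exactly what fails to be formal.
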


\begin{proof}
We will give the proof in the case $r=1$. An easy induction
argument can be used to get the general case.

So assume that we have a field extension $K\subseteq L$ and a hereditary
saturated subset $H$ of $E^0$. We have to show that $S:=P_K^{\rm rat}(E)+
P_K^{\rm rat}(E)p_HP_L^{\rm rat}(E_H)$ is the rational closure of
$R:=P_K(E)+
P_K(E)p_HP_L(E_H)$ in $P_L((E))$, the algebra of power series over $E$ with
coefficients in $L$. Write $\mathcal R$ for this rational closure.

We start by showing that $S\subseteq \mathcal R$. Since $P^{\rm rat}_K(E)$
is the rational closure of $P_K(E)$ inside $P_L((E))$, we see that
$P^{\rm rat}_K(E)\subseteq \mathcal R$.
Also, note that the algebra $p_H\mathcal R=p_H\mathcal R p_H$
is inversion closed in $p_HP_L((E_H))$ and contains $p_HP_L(E_H)$,
so it must contain the rational closure
of $p_HP_L(E_H)$ in $p_HP_L((E_H))$ which is precisely $p_HP_L^{\rm rat}(E_H)$.
It follows that $P^{\rm rat}_K(E)$ and $p_HP_L^{\rm rat}(E_H)$ are both contained in
$\mathcal R$. Since $\mathcal R$ is a ring, we get $S\subseteq \mathcal R$.

To show the reverse inclusion $\mathcal R \subseteq S$, take any
element $a$ in $\mathcal R$. There exist a row $\lambda\in {^nR}$, a
column $\rho\in R^n$ and a matrix $B\in M_n(R)$ such that $\epsilon
(B)=0$ such that
\begin{equation}
\label{equ:5.3}
a=\lambda (I-B)^{-1}\rho .
\end{equation}
Now the matrix $B$ can be written as $B=B_1+B_2$, where $B_1\in P_K(E)\subseteq R$
and $B_2\in R$ satisfy that $\epsilon (B_1)=\epsilon (B_2)=0$, all the entries
of $B_1$ are supported on paths ending in $E^0\setminus H$ and all the
entries of $B_2$ are supported on paths ending in $H$. Note that, since $H$
is hereditary, this implies that all the paths in the support of the
entries of $B_1$ start in $E^0\setminus H$ and thus $B_2B_1=0$.
It follows that
\begin{equation}
\label{Binverse}
(I-B)^{-1}=(I-B_1-B_2)^{-1}=(I-B_1)^{-1}(I-B_2)^{-1},
\end{equation}
and therefore
$(I-B)^{-1}=(I-B_1)^{-1}+(I-B_1)^{-1}B_2(I-B_2)^{-1}\in M_n(S).$
It follows from (\ref{equ:5.3}) that $a\in S$, as desired.

Since the set $\Sigma$ is precisely the set of square matrices over
$R$ which are invertible over $P_L((E))$, we get from a well-known general result
(see for instance \cite[Lemma
10.35(3)]{Luck}) that there is a surjective $K$-algebra homomorphism $\phi \colon R{\Sigma}^{-1}\to \mathcal R$.

The rest of the proof is devoted to show that $\phi $ is injective.
We have a commutative diagram
\begin{equation}
\begin{CD}
P_K(E)\Sigma(\epsilon_K )^{-1} @>>> R\Sigma^ {-1} @>>> P_L(E)\Sigma (\epsilon _L)^{-1} \\
@V{\phi_K}V{\cong}V  @V{\phi}VV   @V{\phi_L}V{\cong}V \\
P_K^{\text{rat}}(E) @>>> \mathcal R @>>> P_L^{\text{rat}}(E)
\end{CD}
\end{equation}
The map $P_K(E)\Sigma(\epsilon_K )^{-1} \to  P_L(E)\Sigma (\epsilon _L)^{-1} $ is injective,
so the map $P_K(E)\Sigma(\epsilon_K )^{-1} \to R\Sigma^ {-1}  $ must also be injective.
Hence the $K$-subalgebra of $R\Sigma^{-1}$ generated by $P_K(E)$ and the entries of the inverses of matrices in $\Sigma (\epsilon_K )$
is isomorphic to $P_K^{\text{rat}}(E)$. Observe that we can replace $\Sigma$ by the set of matrices of the form $I-B$, where $B$ is a square matrix over $R$
with $\epsilon (B)=0$. As before we write $B=B_1+B_2$, where all the entries of $B_1$ end in $E^0\setminus H$ and all the entries in $B_2$ end in $H$, and thus $B_2B_1=0$,
so that (\ref{Binverse}) holds in $R\Sigma^{-1}$.
An element $x$ in $R\Sigma^{-1}$ is of the form
\begin{equation}
\label{can-form}
x=\lambda (I-B)^{-1} \rho
\end{equation}
with $\lambda \in {^n R}$ and $\rho \in R^ n$,  and $\epsilon (B)=0$.

\medskip

\noindent {\it Claim 1.}  We have
$$p_HR\Sigma ^{-1} =p_HP_L^{\text{rat}}(E_H)= p_H P_L(E_H)\Sigma (\epsilon _L^H)^{-1}p_H .$$

\noindent {\it Proof of Claim 1.}
Observe first that we have a natural $L$-algebra homomorphism $P_L(E_H)\Sigma (\epsilon _L^ H)^{-1}\to p_HR\Sigma^{-1}$.
The composition of this map with the map $R\Sigma^ {-1} \to P_L(E)\Sigma (\epsilon _L)^{-1}$ is injective (since its image is
 $p_HP_L^{\text{rat}}(E_H)\cong
P_L^{\text{rat}}(E_H)\cong P_L(E_H)\Sigma (\epsilon _L^ H)^{-1}$) so
the map $P_L(E_H)\Sigma (\epsilon _L^ H)^{-1}\to p_HR\Sigma^{ -1}$
must be injective. We identify $p_HP_L^{\text{rat}}(E_H)$ with its
image in $p_HR\Sigma^{-1}$, which is the $L$-subalgebra of
$p_HR\Sigma^{-1}$ generated by $p_HP_L(E_H)$ and the entries of the
inverses of matrices of the form $p_HI-B$, with $B$ a square matrix
over $p_HP_L(E_H)$  with $\epsilon (B)=0$. For an element $x$ in
$R\Sigma ^{-1}$, we write it in its canonical form (\ref{can-form})
and we write $B=B_1+B_2$ with all the entries in $B_1$ ending in
$E^0\setminus H$ and all the entries of $B_2$ ending in $H$.

Now multiply (\ref{can-form}) on the left by $p_H$ and use (\ref{Binverse}) to get
\begin{align*}
p_Hx & =p_H\lambda (I-B_1)^{-1}\rho +p_H\lambda (I-B_1)^{ -1}B_2(I-B_2)^{-1}\rho   \\
& =   p_H\lambda p_H(I-B_1)^{-1}\rho + p_H \lambda p_H(I-B_1)^{ -1}
B_2(I-B_2)^{-1}\rho \\
& = p_H \lambda p_H \rho + p_H \lambda p_H B_2 p_H (I-B_2)^{-1} \rho .
\end{align*}
 Write $B_2=B_2'+B_2''$, where all the entries of $B_2'$ start  in $E^0\setminus H$ and all the entries in $B_2''$ start in $H$ (and so end in $H$ as well).
 Note that $(I-B_2')^{-1}=I+B_2'$, because $B_2'^ 2=0$, so that $p_H(I-B_2')^{-1} =p_H$. Since $B_2''B_2'=0$ we have  $(I-B_2)^{-1}=(I-B_2')^{-1}(I-B_2'')^{-1}$,
 and thus
 $$p_Hx=p_H\lambda p_H\rho p_H + p_H\lambda p_H B_2 p_H (I-B_2'')^{-1} p_H\rho p_H .$$
 It follows that $p_Hx\in p_H P_L^{\text{rat}} (E_H)$, as wanted. \qed

 Assume now that $x\in \ker (R\Sigma ^{-1} \to \mathcal R )= \ker ( R\Sigma ^{-1} \to P_L(E)\Sigma (\epsilon _L)^{-1})$ and write $x$ as in (\ref{can-form}), with $B=B_1+B_2$ as before.
 Then
\begin{equation}
\label{express1}
x=\lambda (I-B_1)^{-1}\rho + \lambda (I-B_1)^{-1} B_2(I-B_2)^{-1} \rho .
\end{equation}
Multiplying on the right by $1-p_H$, we get
 $$x(1-p_H)=\lambda (I-B_1)^{-1}\rho (1-p_H) = \lambda (1-p_H)(I-B_1)^{-1}\rho (1-p_H)  \in P_K(E)\Sigma (\epsilon _K)^{-1}$$
 and $0=\phi (x(1-p_H))=\phi_K(x(1-p_H))$.  Since $\phi _K$ is an isomorphism, we get $x(1-p_H)=0$.

Hence we have
\begin{equation}
\label{express2} x=  \lambda (I-B_1)^{ -1} \rho_2 +  \lambda
(I-B_1)^{-1}B_2(I-B_2)^{-1}\rho_2   ,
\end{equation}
 where $\rho=\rho_1+\rho_2$ with $\rho_1$ ending in $E^0\setminus H$ and $\rho_2$ ending in $H$.
 By Claim 1 we have $p_Hx=0$, because $\phi $ is an isomorphism when restricted to $p_HP_L^{\text{rat}}(E_H)$.
 Now we are going to find a suitable expression for $x=(1-p_H)xp_H$.
 Write $\lambda =\lambda _1+\lambda _2$ with $\lambda _1=(1-p_H)\lambda $ and $\lambda _2=p_H\lambda $.
  Then
  \begin{equation}
 \label{express3}
(1-p_H) \lambda (I-B_1)^{-1}\rho_2 =\lambda _1 (I-B_1)^{ -1}\rho _2 .
 \end{equation}
 Similarly $(1-p_H)\lambda (I-B_1)^{-1}B_2(I-B_2)^{-1}\rho_2 =\lambda _1 (I-B_1)^{-1} B_2(I-B_2)^{-1}\rho_2 $.
 Write $B_2=B_2'+B_2''$, with $B_2'$ starting in $E^0\setminus H$ and $B_2''$ starting in $H$. Then $B_2''B_2'=0$ and $(I-B_2)^{-1}=(I-B_2')^{-1}(I-B_2'')^{-1}$,
 so that
  \begin{align}
 \label{express4}
\notag &  (1-p_H)\lambda (I-B_1)^{-1}B_2(I-B_2)^{-1}\rho_2 =\lambda _1 (I-B_1)^{-1} B_2(I-B_2)^{-1}\rho_2 \\
  &  = \lambda_1 (I-B_1)^{-1}B_2(I+B_2')(I-B_2'')^{-1}\rho_2 \\
 \notag & =  \lambda_1 (I-B_1)^{-1}B_2(I-B_2'')^{-1}\rho_2.
   \end{align}
 Substituting (\ref{express3}) and (\ref{express4}) in (\ref{express2}) we get
   \begin{equation}
\label{express5}
x=  (1-p_H)xp_H=  \lambda_1 (I-B_1)^{ -1} \rho_2+  \lambda_1(I-B_1)^{-1}B_2  (I-B_2'')^{-1}\rho_2   .
\end{equation}
It follows that $x\in \sum _{i=1}^k P_K^{\text{rat}}
(E/H)e_iP_L^{\text{rat}} (E_H)$, where $e_1,\dots ,e_k$ is the
family of {\it crossing edges}, that is, the family of edges $e\in
E^1$ such that $s(e)\in E^0\setminus H$ and $r(e)\in H$. Write
$x=\sum _{i=1}^k \sum _{j=1}^{m_i} a_{ij}e_ib_{ij} $ for certain
$a_{ij}\in P_K^{\text{rat}} (E/H)$ and $b_{ij}\in P_L^{\text{rat}}
(E_H)$. Then we have
$$0=\phi (x)= \sum _{i=1}^k \sum _{j=1}^{m_i} a_{ij}e_ib_{ij} ,$$
this element being now in $P_L((E))$. Clearly this implies that $ \sum _{j=1}^{m_i} a_{ij}e_ib_{ij}=0$ in $P_L((E))$ for all $i= 1,\dots ,k$.
So the result follows from the following claim:

\medskip

\noindent  {\it Claim 2.}  Let $e$ be a crossing edge, so that $s(e)\in E^0\setminus H$ and $r(e)\in H$.
Assume that $b_1,\dots ,b_m\in p_{r(e)}P_L((E_H))$ are $K$-linearly independent elements, and assume that $a_1e,\dots , a_me$ are not all $0$, where
$a_1,\dots ,a_m\in P_K((E\setminus H))$. Then $\sum _{i=1}^m a_ieb_i \ne 0$ in $P_L((E))$.

\noindent {\it Proof of Claim 2.} By way of contradiction, suppose
that $\sum _{i=1}^m a_ieb_i = 0$. We may assume that $a_1e\ne 0$.
Let $\gamma $ be a path in the support of $a_1$ such that $r(\gamma
)=s(e)$. For every path $\mu$ with $s(\mu )=r(e)$ we have that the
coefficient of $\gamma e \mu $ in $a_ieb_i$ is $a_i(\gamma)
b_i(\mu)$, so that $\sum _{i=1}^m a_i(\gamma )b_i(\mu) =0$ for every
$\mu$ such that $s(\mu)=r(e)$. Since every path in the support of
each $b_i$ starts with $r(e)$, we get that
$$\sum_{i=1}^m a_i(\gamma )b_i=0$$
with $a_1(\gamma)\ne 0$, which contradicts the linear independence over $K$ of $b_1,\dots, b_m$. \qed

This concludes the proof of the theorem.
\end{proof}

Following \cite[Section 2]{AB2}, we define, for $e\in E^1$, the right transduction
$\tilde{\delta}_e\colon P_L((E))\to P_L((E))$ corresponding to $e$ by $$\tilde{\delta_e} (\sum_{\alpha\in
E^*}\lambda_\alpha \alpha)=\sum_{\substack{\alpha\in
E^*\\s(\alpha)=r(e)}} \lambda_{e\alpha } \alpha.$$
Similarly the left transduction corresponding to $e$ is given by
$$\delta_e (\sum_{\alpha\in
E^*}\lambda_\alpha \alpha)=\sum_{\substack{\alpha\in
E^*\\r(\alpha)=s(e)}} \lambda_{\alpha e} \alpha.$$

Observe that $R:=P_{\mathbf{K}_r}(E;\mathbf{H}_r)$ is closed under all the right transductions, i.e.  $\tilde{\delta}_e(R)\subseteq R$,
but  $R$ is not invariant under all the left transductions. Some of the proofs in \cite{AB2}
make use of the fact that the usual path algebra $P_K(E)$ is closed under {\it left and right}  transductions.
Fortunately we have been able to overcome the potential problems arising from the failure of invariance of $R$ under left transductions
by using alternative arguments.

We are now ready to get a description of the algebra $Q_{\mathbf{K}_r}(E;\mathbf{H}_r)$
as a universal localization of the mixed path algebra $P_{\mathbf{K}_r}(E;\mathbf{H}_r)$.

 Write $R:=P_{\mathbf{K}_r}(E;\mathbf{H}_r)$. For any $v\in E^0$ such that $s^{-1}(v)\neq\emptyset$ we put
 $s^{-1}(v)=\{e^v_1,\dotsc,e^v_{n_v}\}$, and we
consider the left $R$-module homomorphism
 \begin{align*}
  \mu_v\colon Rv&\longrightarrow \bigoplus_{i=1}^{n_v}Rr(e^v_i)\\
  r&\longmapsto\left(re^v_1,\dotsc,re^v_{n_v}\right)
 \end{align*}
 Write $\Sigma_1=\{\mu_v\mid v\in E^0,\,s^{-1}(v)\neq \emptyset\}$.

\begin{theor}
\label{hereditarymixed} Let $\mathbf{K}_r$ and $\mathbf{H}_r$ and
$P_{\mathbf{K}_r}(E;\mathbf{H}_r)$ be as before. Let $\Sigma$ denote
the set of matrices over $P_{\mathbf{K}_r}(E;\mathbf{H}_r)$ that are
sent to invertible matrices by $\epsilon$ and let $\Sigma _1$ be the
set of maps defined above. Then we have
$Q_{\mathbf{K}_r}(E;\mathbf{H}_r)=
(P_{\mathbf{K}_r}(E;\mathbf{H}_r))(\Sigma \cup \Sigma _1)^{-1}$.
Moreover $Q_{\mathbf{K}_r}(E;\mathbf{H}_r)$ is a hereditary von
Neumann regular ring and all finitely generated projective
$Q_{\mathbf{K}_r}(E;\mathbf{H}_r)$-modules are induced from $P^{{\rm
rat}}_{\mathbf{K}_r}(E;\mathbf{H}_r)$.
\end{theor}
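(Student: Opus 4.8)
The plan is to prove the three assertions together by induction on the length $r$, the case $r=0$ being the unmixed \cite[Theorem~4.2]{AB2} (for $r=0$ one has $H_0=E^0$ and every algebra involved is the ordinary one over $K_0$). Write $Q=Q_{\mathbf{K}_r}(E;\mathbf{H}_r)$, $P=P_{\mathbf{K}_r}(E;\mathbf{H}_r)$, $P^{\mathrm{rat}}=P^{\mathrm{rat}}_{\mathbf{K}_r}(E;\mathbf{H}_r)$, and set $e=p_{H_0}$ and $I_0=QeQ$. Propositions~\ref{cutting} and \ref{cornering} supply the two ends of the extension $0\to I_0\to Q\to Q/I_0\to 0$: the corner $eQe\cong Q_{K_r}(E_{H_0})$ is unmixed, while the quotient $Q/I_0\cong Q_{\mathbf{K}_{r-1}}(E/H_0;\mathbf{H}^{r-1})$ is mixed but attached to a strictly shorter chain, hence covered by the inductive hypothesis.

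For the identity $Q=P(\Sigma\cup\Sigma_1)^{-1}$, Theorem~\ref{rat-locali} handles $\Sigma$, giving $P\Sigma^{-1}=P^{\mathrm{rat}}$, so it remains to identify $P^{\mathrm{rat}}\Sigma_1^{-1}$ with $Q$. Since $P^{\mathrm{rat}}$ embeds in $Q$ and relation (CK2) makes each $\mu_v$ invertible in $Q$, the universal property produces a homomorphism $\phi\colon P^{\mathrm{rat}}\Sigma_1^{-1}\to Q$, surjective because $Q$ is generated by $P^{\mathrm{rat}}$ and the entries $\overline{e}$ of the $\mu_v^{-1}$. Injectivity is the heart of the matter: I would carry the $e$-decomposition across $\phi$, checking that the quotient of $P^{\mathrm{rat}}\Sigma_1^{-1}$ by $(e)$ is the $\Sigma_1$-localization of $P^{\mathrm{rat}}/(e)=P^{\mathrm{rat}}_{\mathbf{K}_{r-1}}(E/H_0;\mathbf{H}^{r-1})$, hence $Q/I_0$ by induction, while its $e$-corner is the unmixed localization $P^{\mathrm{rat}}_{K_r}(E_{H_0})\Sigma_1^{-1}=Q_{K_r}(E_{H_0})=eQe$. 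Since $e$ is a full idempotent of the ideal it generates, that ideal is recovered from its corner, so $\phi$ is an isomorphism on both corners and quotients, and a five-lemma argument forces $\phi$ to be an isomorphism.

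Von Neumann regularity I would read off the same extension. The corner $eQe\cong Q_{K_r}(E_{H_0})$ is regular by \cite{AB2}; since $eI_0e=eQe$ and $e$ is full in $I_0$ (that is, $I_0eI_0=I_0$), the ideal $I_0$ is regular, the quotient $Q/I_0$ is regular by induction, and an extension of a regular ring by a regular ideal is regular, so $Q$ is regular. For hereditariness I would use the localization picture: unwinding the definitions, $P$ is the tensor algebra $T_S(M)$ of the arrow bimodule $M$ over the semisimple ring $S=\prod_{v\in E^0}F_vp_v$, where $F_v=K_{r-i}$ for $v\in H_i\setminus H_{i-1}$. Because $H$ is hereditary one has $F_{s(\alpha)}\subseteq F_{r(\alpha)}$ along every arrow $\alpha$, so $M$ really is an $S$-bimodule, and the standard resolution $0\to T_S(M)\otimes_S M\otimes_S N\to T_S(M)\otimes_S N\to N\to 0$ of a left module $N$ (which is $S$-projective as $S$ is semisimple) shows that $P$ is hereditary. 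As $Q=P(\Sigma\cup\Sigma_1)^{-1}$ is a universal localization of $P$ at maps between finitely generated projectives, and such localizations preserve hereditariness exactly as in the unmixed case \cite{AB2}, $Q$ is hereditary.

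Finally, for the induced projectives, regularity of $Q$ means its finitely generated projectives are classified by $\mathcal{V}(Q)$, a monoid generated by the vertex classes $[Qp_v]$ (compare \ref{quotient}(a)); each $[Qp_v]$ is the image of $[P^{\mathrm{rat}}p_v]$, so the monoid map $\mathcal{V}(P^{\mathrm{rat}})\to\mathcal{V}(Q)$, $[N]\mapsto[Q\otimes_{P^{\mathrm{rat}}}N]$, is surjective, which is precisely the assertion that every finitely generated projective $Q$-module is induced from $P^{\mathrm{rat}}$. I expect the injectivity of $\phi$ to be the main obstacle: concretely, one must show that universal localization commutes both with the corner $e(-)e$ and with the quotient by $(e)$, so that the unmixed case and the inductive hypothesis can be glued along the extension $0\to I_0\to Q\to Q/I_0\to 0$. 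Once this is secured, the regularity, hereditariness and induced-projective statements follow comparatively formally.
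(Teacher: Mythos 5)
Your architecture---induction on $r$ along the extension $0\to I_0\to Q\to Q/I_0\to 0$ with $e=p_{H_0}$---is genuinely different from the paper's, but three of its load-bearing steps are either unproved or actually false. First, the injectivity of $\phi\colon P^{\mathrm{rat}}\Sigma_1^{-1}\to Q$ rests on universal localization commuting with the corner $e(-)e$, which you flag but do not supply; this is not a formal fact here, since $\Sigma_1$ contains the maps $\mu_v$ for $v\notin H_0$ whose codomains involve the crossing edges into $H_0$, so the localization genuinely entangles the corner with its complement. (The paper's corner identifications, Proposition \ref{quotient}(c)(4) and Proposition \ref{cornering}, are proved from the explicit representation of elements of $Q(E)$ as series $\sum_\gamma a_\gamma\overline{\gamma}$ with rational coefficients, not by a formal localization argument.) Moreover, even granting isomorphisms on the corner and the quotient, your five-lemma step needs $\phi$ to be injective on the ideal $(e)$, and injectivity on $eAe$ does not yield injectivity on $AeA$ unless one also knows the multiplication map $Ae\otimes_{eAe}eA\to AeA$ is bijective---again not formal. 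Second, the inference ``$eI_0e=eQe$ regular and $e$ full in $I_0$ implies $I_0$ regular'' is false in general: in the upper triangular matrices $Q=\left(\begin{smallmatrix} k & k\\ 0 & k\end{smallmatrix}\right)$ with $e=e_{11}$ one has $I_0=QeQ=ke_{11}+ke_{12}$, $I_0eI_0=I_0$ and $eI_0e\cong k$, yet $e_{12}$ has no quasi-inverse in $I_0$, so $I_0$ is not regular. Third, the induced-projectives argument is circular: the claim that $\mathcal{V}(Q)$ is generated by the vertex classes $[Qp_v]$ is essentially Theorem \ref{isomorphs}, whose proof in the paper uses precisely the induced-projectives assertion of the present theorem; von Neumann regularity alone gives no such generation statement.

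The paper's actual proof avoids all of this by never leaving $Q_{K_r}(E)$: it first gets that $P_{\mathbf{K}_r}(E;\mathbf{H}_r)$ is hereditary with $\mathcal{V}=(\mathbb{Z}^+)^d$ from Bergman's coproduct theorem \cite[Theorem 5.3]{Bergman}; it then realizes $P^{\mathrm{rat}}\Sigma_1^{-1}$ as a quotient of a skew semigroup ring $R\langle \overline{E};\tau,\tilde{\delta}\rangle/I$ via the right transductions (with a modified $\tau_e$), and invokes \cite[Remark 2.14]{AB2} to see that this ring maps \emph{injectively} into $Q_{K_r}(E)$ with image exactly $Q_{\mathbf{K}_r}(E;\mathbf{H}_r)$---this embedding is what replaces your corner/quotient gluing; hereditariness of both $P^{\mathrm{rat}}$ and $Q$ then comes from the Bergman--Dicks theorem \cite{BD} (needed because $P$ fails to be closed under left transductions, cf.\ Remark \ref{no-invneed}), and regularity together with the induced-projectives statement comes from rerunning the proof of \cite[Theorem 2.16]{AB2}. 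One genuinely salvageable piece of your proposal: the observation that $P$ is the tensor algebra $T_S(M)$ over the semisimple ring $S=\prod_v F_vp_v$, with coefficients $F_{r(\gamma)}$ growing along paths because the $H_i$ are hereditary, is correct and gives a clean alternative proof that $P$ is hereditary; but it does not repair the three gaps above, each of which in the paper is handled by structural series arguments rather than formal manipulations of localizations and corners.
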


\begin{proof} First observe that the mixed path algebra $P_{\mathbf{K}_r}(E;\mathbf{H}_r)$
is a hereditary ring and that
$\mon{P_{\mathbf{K}_r}(E;\mathbf{H}_r)}=(\mathbb Z^+)^d$, where
$|E^0|=d$. This follows by successive use of \cite[Theorem
5.3]{Bergman}.

In order to get that the right transduction $\tilde{\delta}_e\colon
P_{K_r}((E))\to P_{K_r}((E))$ corresponding to $e$ is a right
$\tau_e$-derivation on $P_{\mathbf{K}_r}(E;\mathbf{H}_r)$, that is,
\begin{equation}
\label{right-der} \tilde{\delta}_e
(rs)=\tilde{\delta}_e(r)s+\tau_e(r)\tilde{\delta}_e(s)
\end{equation}
for all $r,s\in P_{\mathbf{K}_r}(E;\mathbf{H}_r)$, we have to modify
slightly the definition of $\tau _e$ given in \cite[page 220]{AB2}.
Concretely we define $\tau _e$ as the endomorphism of $P_{K_r}((E))$
given by the composition
$$P_{K_r}((E))\to \prod_{v\in E^0} K_rp_v\to \prod_{v\in E^0} K_rp_v\to P_{K_r}((E)),$$
where the first and third maps are the canonical projection and
inclusion respectively, and the middle map is the $K_r$-lineal map
given by sending $p_{s(e)}$ to $p_{r(e)}$, and any other idempotent
$p_v$ with $v\ne s(e)$ to $0$. Observe that this restricts to an
endomorphism of $P_{\mathbf{K}_r}(E;\mathbf{H}_r)$ and that the
proof in \cite[Lemma 2.4]{AB2} gives the desired formula
(\ref{right-der}) for $r,s\in P_{K_r}((E))$ and, in particular for
$r,s\in P_{\mathbf{K}_r}(E;\mathbf{H}_r)$. The constructions in
\cite[Section 2]{AB2} apply to $R:=P^{\rm
rat}_{\mathbf{K}_r}(E;\mathbf{H}_r)$ (with some minor changes), and
we get that $R\Sigma_1^{-1}=R\langle \ol{E};
\tau,\tilde{\delta}\rangle /I$, where $I$ is the ideal of $R\langle
\ol{E}; \tau,\tilde{\delta}\rangle$ generated by the idempotents
$q_i:=p_i-\sum _{e\in s^{-1}(i)}e\ol{e}$ for $i\notin \Si (E)$. By
\cite[Remark 2.14]{AB2}, we get that the map
$$R\Sigma_1^{-1}=R\langle \ol{E}; \tau,\tilde{\delta}\rangle /I\longrightarrow
 (P^{{\rm rat}}_{K_r}(E))\langle \ol{E};\tau ,\tilde{\delta}\rangle/I_2=Q_{K_r}(E)$$
is injective, and the image of this map is clearly
$Q_{\mathbf{K}_r}(E;\mathbf{H}_r)$. So we get an isomorphism
$R\Sigma_1^{-1}\cong Q_{\mathbf{K}_r}(E;\mathbf{H}_r)$, which
combined with the isomorphism $R\cong
P_{\mathbf{K}_r}(E;\mathbf{H}_r)\Sigma ^{-1}$ established in Theorem
\ref{rat-locali} gives $Q_{\mathbf{K}_r}(E;\mathbf{H}_r)\cong
(P_{\mathbf{K}_r}(E;\mathbf{H}_r))(\Sigma \cup \Sigma _1)^{-1}$. By
a result of Bergman and Dicks \cite{BD} any universal localization
of a hereditary ring is hereditary, thus we get that both
$P^{\text{rat}}_{\mathbf{K}_r}(E;\mathbf{H}_r)$ and
$Q_{\mathbf{K}_r}(E;\mathbf{H}_r)$ are hereditary rings. Since
$P^{\text{rat}}_{\mathbf{K}_r}(E;\mathbf{H}_r)$ is hereditary,
closed under inversion in $P_{K_r}((E))$ (by Theorem
\ref{rat-locali}), and closed under all the right transductions
$\tilde{\delta}_e$, for $e\in E^1$, the proof of \cite[Theorem
2.16]{AB2} gives that $Q_{\mathbf{K}_r}(E;\mathbf{H}_r)$ is von
Neumann regular and that every finitely generated projective is
induced from $P^{{\rm rat}}_{\mathbf{K}_r}(E;\mathbf{H}_r)$.

This concludes the proof of the theorem.
\end{proof}

\begin{remark}
\label{no-invneed} Theorem 2.16 in \cite{AB2} is stated for a
subalgebra $R$ of $P_K((E))$ which is closed under all left and
right transductions (and which is inversion closed in $P_K((E))$).
However the invariance under right transductions is only used in the
proof of that result to ensure that the ring $R$ is left
semihereditary. Since we are using the opposite notation concerning
(CK1) and (CK2), the above hypothesis translates in our setting into
the condition that $P_{\mathbf{K}_r}(E;\mathbf{H}_r)$ and $P^{{\rm
rat}}_{\mathbf{K}_r}(E;\mathbf{H}_r)$ should be invariant under all
{\it left} transductions, which is not true in general as we
observed above. We overcome this problem by the use of the result of
Bergman and Dicks (\cite{BD}), which guarantees that $
P_{\mathbf{K}_r}(E;\mathbf{H}_r)$  and $P^{{\rm
rat}}_{\mathbf{K}_r}(E;\mathbf{H}_r)$ are indeed right and left
hereditary (see the proof of Theorem \ref{hereditarymixed}).
\end{remark}

Define the mixed Leavitt path algebra
$L_{\mathbf{K}_r}(E;\mathbf{H}_r)$ as the universal localization of
$P_{\mathbf{K}_r}(E;\mathbf{H}_r)$ with respect to the set
$\Sigma_1$. Let $M(E)$ be the abelian monoid with generators $E^0$
and relations given by $v=\sum _{e\in s^{-1}(v)} r(e)$, see
\cite{AMP} and \cite{APW}.

\begin{theor}
\label{isomorphs} With the above notation, we have natural
isomorphisms
$$M(E)\cong  \mon{L_{\mathbf{K}_r}(E_r;\mathbf{H}_r)}\cong
\mon{Q_{\mathbf{K}_r}(E_r;\mathbf{H}_r)}.$$
\end{theor}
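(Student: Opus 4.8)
The plan is to realize both $\mon{L_{\mathbf{K}_r}(E;\mathbf{H}_r)}$ and $\mon{Q_{\mathbf{K}_r}(E;\mathbf{H}_r)}$ as quotients of $\mon{P_{\mathbf{K}_r}(E;\mathbf{H}_r)}$ by universal localization, following the pattern of the unmixed computation in \cite[Theorem 4.2]{AB2}. Write $P:=P_{\mathbf{K}_r}(E;\mathbf{H}_r)$. The facts I need are already available from the proof of Theorem \ref{hereditarymixed}: $P$ (and $P^{\mathrm{rat}}_{\mathbf{K}_r}(E;\mathbf{H}_r)$) is a left and right hereditary ring, $\mon{P}=(\mathbb Z^+)^d$ is free on the classes $[v]$ of the indecomposable projective left modules $Pp_v$, $v\in E^0$, and $L_{\mathbf{K}_r}(E;\mathbf{H}_r)=P\Sigma_1^{-1}$, $Q_{\mathbf{K}_r}(E;\mathbf{H}_r)=P(\Sigma\cup\Sigma_1)^{-1}$ are universal localizations of $P$ at morphisms between finitely generated projective left modules.

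The main tool I would invoke is the standard description of the monoid of a universal localization of a hereditary ring: if $R$ is hereditary and $T$ is a set of homomorphisms between finitely generated projective left $R$-modules, then $RT^{-1}$ is again hereditary, every finitely generated projective $RT^{-1}$-module is induced from $R$, and the canonical map identifies $\mon{RT^{-1}}$ with the quotient of $\mon R$ by the monoid congruence generated by the pairs $\bigl([\mathrm{dom}\,\sigma],[\mathrm{cod}\,\sigma]\bigr)$, $\sigma\in T$. The hypotheses hold here by Theorem \ref{hereditarymixed} (heredity via Bergman--Dicks \cite{BD}, and induced projectives). Reading off the relations for $T=\Sigma_1$ is immediate: the generator $\mu_v$ has domain $Pv$, with class $[v]$, and codomain $\bigoplus_{i=1}^{n_v}P r(e^v_i)$, with class $\sum_{e\in s^{-1}(v)}[r(e)]$, so localizing at $\Sigma_1$ imposes exactly the defining relations $v=\sum_{e\in s^{-1}(v)}r(e)$ of $M(E)$ on the free monoid $(\mathbb Z^+)^d$. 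Hence $\mon{L_{\mathbf{K}_r}(E;\mathbf{H}_r)}\cong M(E)$.

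For $Q$ I would localize in two stages, writing $P(\Sigma\cup\Sigma_1)^{-1}=(P\Sigma^{-1})\Sigma_1^{-1}$. Every member of $\Sigma$ is a square matrix $I-B$, i.e.\ an endomorphism of a free module $P^n$; its domain and codomain coincide, so it contributes no relation, and therefore $\mon{P\Sigma^{-1}}=\mon{P^{\mathrm{rat}}_{\mathbf{K}_r}(E;\mathbf{H}_r)}\cong(\mathbb Z^+)^d$ with the same generators $[v]$. Localizing $P^{\mathrm{rat}}_{\mathbf{K}_r}(E;\mathbf{H}_r)$ further at $\Sigma_1$ then imposes precisely the relations above, giving $\mon{Q_{\mathbf{K}_r}(E;\mathbf{H}_r)}\cong M(E)$. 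Finally, all the identifications arise from the single surjection out of $\mon{P}$ through the factorization $P\to L_{\mathbf{K}_r}(E;\mathbf{H}_r)\to Q_{\mathbf{K}_r}(E;\mathbf{H}_r)$, so the induced map $\mon{L_{\mathbf{K}_r}(E;\mathbf{H}_r)}\to\mon{Q_{\mathbf{K}_r}(E;\mathbf{H}_r)}$ corresponds to the identity on $M(E)$ and the whole chain of isomorphisms is natural.

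The delicate point, and the only nontrivial part of the argument, is the injectivity half of the localization result: that no relations beyond those forced by $\Sigma\cup\Sigma_1$ survive. Surjectivity of $\mon{P}\to\mon{Q}$ (and of $\mon{P}\to\mon{L_{\mathbf{K}_r}(E;\mathbf{H}_r)}$) is immediate from the induced-projectives statement of Theorem \ref{hereditarymixed}; the content is that two finitely generated projectives that become isomorphic after localization are already identified by the congruence generated by the localizing set, which is where heredity of $P$ is genuinely used. I would settle this by verifying the hypotheses of the general principle as applied in \cite[Theorem 4.2]{AB2} and \cite{AMP}. Alternatively, since $M(E)$ is visibly insensitive to both the field chain $\mathbf{K}_r$ and the filtration $\mathbf{H}_r$, one can argue by induction on $r$, using Propositions \ref{cutting} and \ref{cornering} to reduce the computation of $\mon{\cdot}$ to the unmixed case already settled in \cite{AB2} and \cite{AMP}.
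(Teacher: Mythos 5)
Your overall architecture (surjectivity from induced projectives, relations coming from $\Sigma_1$, the two-stage localization $P(\Sigma\cup\Sigma_1)^{-1}=(P\Sigma^{-1})\Sigma_1^{-1}$ with $P\Sigma^{-1}\cong P^{\rm rat}_{\mathbf{K}_r}(E;\mathbf{H}_r)$ semiperfect) reproduces the easy half of the paper's argument, but the step you yourself flag as delicate is a genuine gap, and the tool you invoke to close it does not exist in the generality you state. There is no ``standard description'' of $\mon{RT^{-1}}$ for a hereditary ring $R$ as $\mon{R}$ modulo the congruence generated by the pairs $\bigl([\mathrm{dom}\,\sigma],[\mathrm{cod}\,\sigma]\bigr)$: taking $R=K$ and $T=\{0\colon K\to K\}$ gives $RT^{-1}=0$ while the congruence is trivial, so the claimed formula fails already over a field. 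What \cite{Bergman} actually provides is a computation of $\mon{}$ for specific universal constructions (e.g.\ adjoining a universal isomorphism between two finitely generated projectives over a hereditary algebra); \cite[Theorem 3.5]{AMP} obtains $\mon{L_K(E)}\cong M(E)$ by realizing the Leavitt path algebra as such an iterated Bergman construction, not by applying a general localization formula, and this transfers to the mixed setting — that is exactly how the paper disposes of the first isomorphism. For $Q_{\mathbf{K}_r}(E;\mathbf{H}_r)$, however, your second-stage localization of $P^{\rm rat}_{\mathbf{K}_r}(E;\mathbf{H}_r)$ at $\Sigma_1$ is not covered by those references, so ``verifying the hypotheses of the general principle as applied in \cite[Theorem 4.2]{AB2}'' is circular: injectivity there is the whole content. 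Your fallback induction also does not close. Propositions \ref{cutting} and \ref{cornering} compute $\mon{}$ of a corner $p_{H_i}Rp_{H_i}$ and of a quotient $R/I_{i-1}$, but $\mon{R}$ is not determined by these data (there is a monoid extension problem), so ``reducing to the unmixed case'' in this way needs an argument you have not supplied.

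The paper's device for injectivity, which you miss, is a two-line sandwich that piggybacks on the unmixed case: the unital inclusions $Q_{K_0}(E)\subseteq Q_{\mathbf{K}_r}(E;\mathbf{H}_r)\subseteq Q_{K_r}(E)$ (built into Definition \ref{buildblock}) induce monoid homomorphisms
$$M(E)\cong\mon{Q_{K_0}(E)}\longrightarrow\mon{Q_{\mathbf{K}_r}(E;\mathbf{H}_r)}\longrightarrow\mon{Q_{K_r}(E)}\cong M(E),$$
each sending $[p_v]\mapsto[p_v]$; by the unmixed result \cite[Theorem 4.2]{AB2} the two outer identifications make the composite the identity on the generators of $M(E)$, hence the identity. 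Therefore the natural surjection $M(E)\to\mon{Q_{\mathbf{K}_r}(E;\mathbf{H}_r)}$ — which, as in your proposal, comes from Theorem \ref{hereditarymixed} together with semiperfectness of $P^{\rm rat}_{\mathbf{K}_r}(E;\mathbf{H}_r)$ — is also injective. Replacing your ``general principle'' by this sandwich, and citing \cite{Bergman} and \cite[Theorem 3.5]{AMP} for the Leavitt half, turns your outline into the paper's proof.
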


\begin{proof}
The proof that $M(E)\cong \mon{L_{\mathbf{K}_r}(E_r;\mathbf{H}_r)}$
follows as an application of Bergman's results \cite{Bergman}, as in
\cite[Theorem 3.5]{AMP}.

Note that $R:=P^{\rm rat}_{\mathbf{K}_r}(E;\mathbf{H}_r)$ is
semiperfect. Thus we get $\mon{R}\cong (\Z^+)^{|E_0|}$ in the
natural way, that is the generators of $\mon{R}$ correspond to the
projective modules $p_vR$ for $v\in E^0$. By Theorem
\ref{hereditarymixed}, we get that the natural map $M(E)\to
\mon{Q_{\mathbf{K}_r}(E_r;\mathbf{H}_r)}$ is surjective. To show
injectivity observe that we have
\begin{equation*}
 M(E)\cong \mon{Q_{K_0}(E)}\longrightarrow
\mon{Q_{\mathbf{K}_r}(E_r;\mathbf{H}_r)}\longrightarrow
\mon{Q_{K_r}(E)}\cong M(E) ,
\end{equation*}
and that the composition of the maps above is the identity. It
follows that the map $M(E)\to
\mon{Q_{\mathbf{K}_r}(E_r;\mathbf{H}_r)}$ is injective and so it
must be a monoid isomorphism.
\end{proof}

\end{document}